\def\IR{\mathbb{R}}
\def\IX{\mathcal{X}}
\newtheorem{theorem}{Theorem}[section]
\newtheorem{lemma}{Lemma}[section]
\newtheorem{assumption}{Assumption}[section]
\newtheorem{remark}{Remark}[section]
\journal{Operations Research Letters}
\begin{document}

\begin{frontmatter}



\title{Improving the generalization via coupled tensor norm regularization}
\author[1]{Ying Gao}
\ead{yinggao@buaa.edu.cn}
\author[1]{Yunfei Qu}
\ead{yunfei19@buaa.edu.cn}

\author[1]{Chunfeng Cui\corref{cor1}}
\ead{chunfengcui@buaa.edu.cn}
\author[1]{Deren Han}
\ead{handr@buaa.edu.cn}
\cortext[cor1]{Corresponding author}

\address[1]{
	School of Mathematical Sciences, Beihang University, 100191, Beijing, People’s Republic of China}

\begin{abstract}

In this paper, we propose a coupled tensor norm regularization that could enable the model output feature and the data input to lie in a low-dimensional manifold, which helps us to reduce overfitting. We show this regularization term is convex, differentiable, and gradient Lipschitz continuous for logistic regression, while nonconvex and nonsmooth for deep neural networks. We further analyze the convergence of the first-order method for solving this model. The numerical experiments demonstrate that our method is efficient.

\end{abstract}

\begin{keyword}
	Generalization, Coupled tensor norm, Data-dependent regularization, Multinomial logistic regression, Deep neural networks.



\end{keyword}

\end{frontmatter}


\section{Introduction}
Modern machine learning models usually have numerous parameters, which will lead to overfitting and poor performance on unseen samples,   especially when the available training samples are few. 
Regularization functions are widely used to  constrain directly the parameters or internal structure of the model itself, thereby preventing overfitting to improve the generalization. Existing regularization techniques include  $\ell_1$ regularization \cite{shekar2020l1}, $\ell_2$ regularization \cite{cortes2012l2}, and Tikhonov regularization \cite{bishop1995training}. Further, for deep neural networks, regularization methods  such as 
weight decay \cite{krogh1991simple}, dropout \cite{srivastava2014dropout}, and batch normalization \cite{ioffe2015batch}  can also improve the generalization performance. Indeed, 
most regularization functions are  data-independent.

Besides, inherent data structure   can also bring about  data-dependent generalization approaches, such as 
data compression \cite{arora2018stronger}, tensor dropout  \cite{kolbeinsson2021tensor}, and tensor decomposition \cite{panagakis2021tensor}. 
Typically, these approaches only focus on the geometry of the input data. 
 Recently, LDMNet \cite{LDMNet} studied
the geometry of both the input data and the output features to encourage the network to learn geometrically meaningful features.  
However, LDMNet requires  solving a series of    variational subproblems. 

In this paper, motivated by the observation that the input data and the output features should sample a collection of low-dimensional manifolds  \cite{LDMNet}, we propose   the coupled tensor norm regularization. This regularization forces  the input tensor and the output feature matrix to lie  in a low-dimensional manifold explicitly, and  enables better model generalization consequently.
Figure~\ref{fig:motivation} gives an  example  that illustrates the features generated by our proposed coupled tensor norm regularization are better separated than the $\ell_1$-norm, $\ell_2$-norm, and Tikhonov regularization functions. 
\begin{figure}[t]
	\centering
	\subfigure[our proposed coupled tensor norm regularization]{
		\label{Fig.sub.1}
		\includegraphics[width=0.2\textwidth]{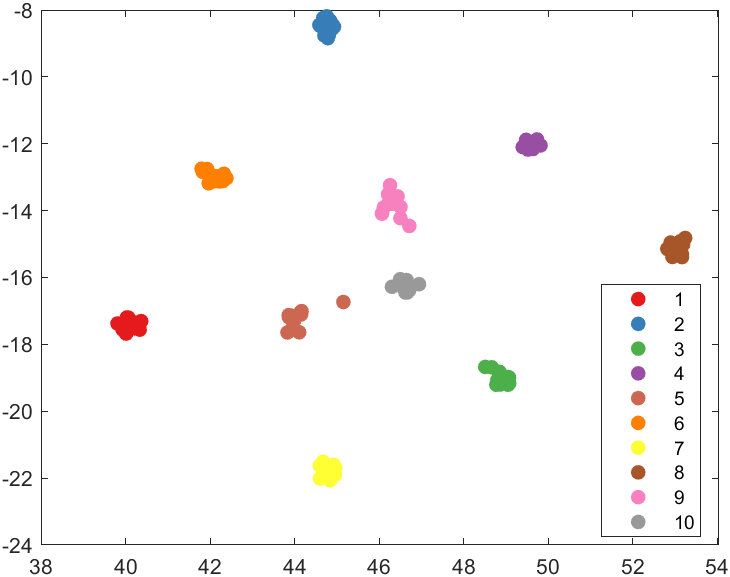}}\quad
	\subfigure[$\ell_1$-norm regularization]{
		\label{Fig.sub.2}
		\includegraphics[width=0.2\textwidth]{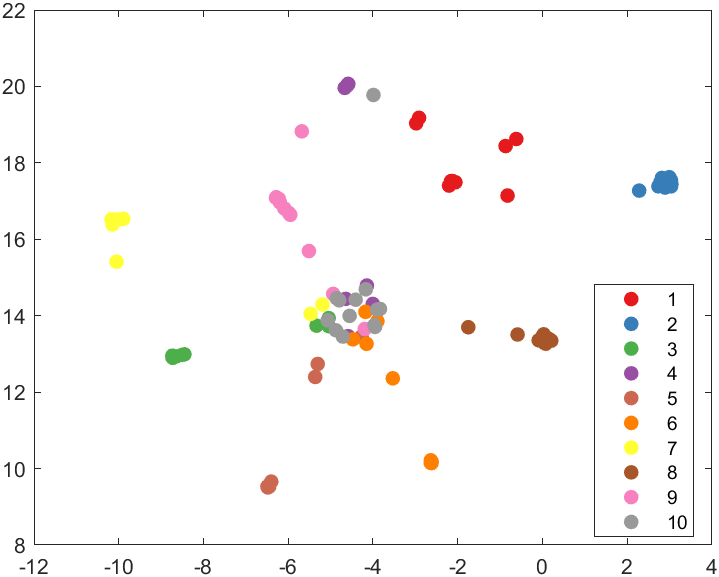}}\\
	\subfigure[$\ell_2$-norm regularization]{
		\label{Fig.sub.3}
		\includegraphics[width=0.2\textwidth]{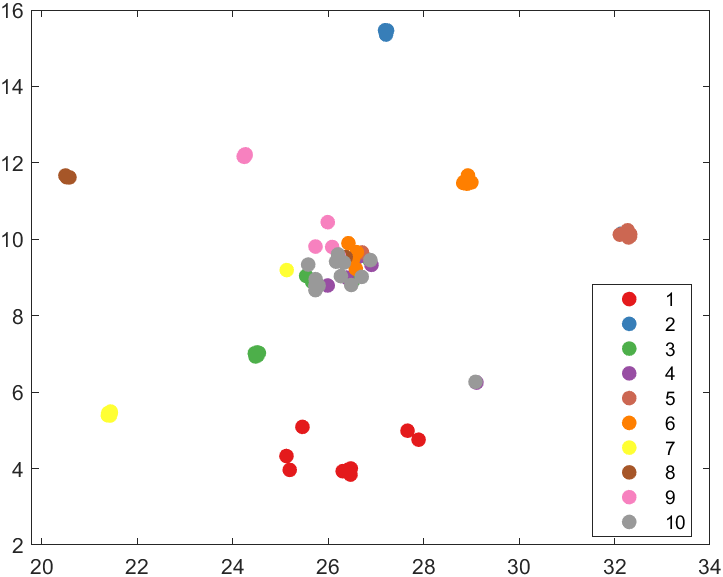}}\quad
	\subfigure[Tikhonov regularization]{
		\label{Fig.sub.4}
		\includegraphics[width=0.2\textwidth]{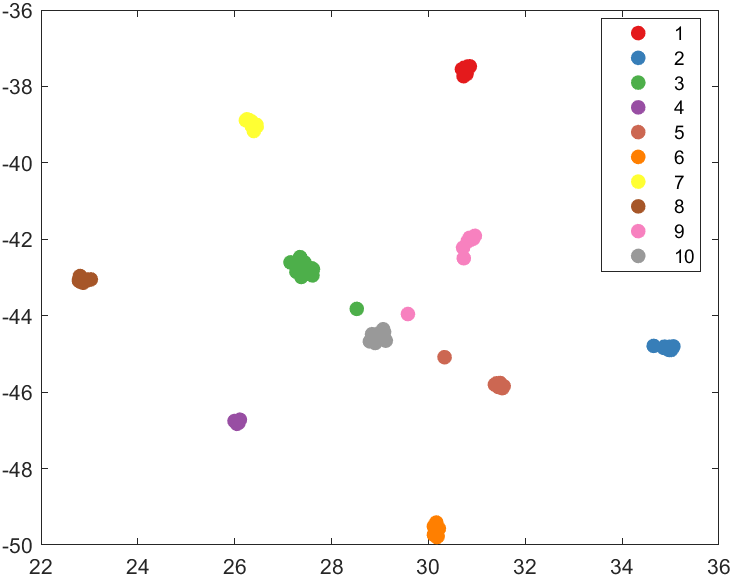}}
	\caption{The output features learned from multinomial logistic regression  with different regularizers for AR10P face dataset. Data are visualized in two dimensions using t-SNE.} 
	\label{fig:motivation}
\end{figure}

From the practical implementation point of view, there exist two difficulties in optimizing the loss function with the coupled tensor norm regularization. Firstly, this coupled norm is nonconvex and nonsmooth in general. Secondly, the  coupled tensor norm regularized model is nonseparable. Our main contributions are as follows.

\begin{itemize}
	\item We propose a coupled tensor norm regularization which enables the input tensor data and the output feature matrix to  lie   in a low dimensional manifold. Further, we present the convexity and smoothness analysis of this regularization  function.
	 \item For multinomial logistic regression,  we show  the coupled tensor norm regularization is convex, differentiable, and its gradient is Lipschitz continuous. The gradient descent approach is adopted to solve the model and we 
	 analyze its global convergence.
	 
	\item For deep neural networks,  we show the  loss function with the coupled tensor norm is nonconvex and  non-differentiable.  Further, we introduce an auxiliary variable  to derive the quadratic penalty formulation. Then we present an alternating minimization method  to  overcome the nonseparability. 
    We show this nonconvex and nonsmooth optimization problem is convergent  based on the K{\L} property.  
	  \item We conduct numerical experiments on a series of real datasets for both  multinomial logistic regression and deep neural networks. Compared with  the  $\ell_1$-norm, $\ell_2$-norm, and Tikhonov regularizations,  our proposed coupled norm regularization performs better in terms of testing accuracy, especially for small data problems.  
\end{itemize}
\subsection{Related work}
\textbf{Low-dimensional manifold.} As discussed in \cite{peyre2008image,peyre2009manifold},  
researchers proved that the patch set of many classes of images could come from a low-dimensional manifold. Afterwards, the low-dimensional manifold model (LDMM) \cite{osher2017low}  was raised  to compute  the dimension of the patch set based on the differential geometry. Then the dimension was  used as the regularization term for the image recovery problem. 
Recently, under an assumption that the concatenation of the input data and the output features should sample a collection of low-dimensional manifolds, LDMNet \cite{LDMNet} was proposed to regularize the deep neural network and outperformed some widely-used regularizers. 
All above models require  solving   the
variational subproblems to get a 
low-dimensional manifold. 
Specifically, the Euler-Lagrange equation caused by subproblem needs to be solved by using discretization and point integral method, which is complex relatively. 

\textbf{Tensor methods for generalization.} 
A recent line of generalization study focuses on tensor methods. For instance, \cite{Li2020understanding}  advanced the understanding of the relations between the network’s architecture and its generalizability from the
compression perspective. \cite{kolbeinsson2021tensor} proposed tensor dropout,  a randomization technique that can be applied to tensor factorizations, for improving generalization. Moreover, tensor methods \cite{cohen2016convolutional} can also be used to design efficient deep neural networks with stronger generalization. 
Please see  \cite{panagakis2021tensor} for a review of  tensor methods  in computer vision and deep learning.  

\textbf{Coupled tensor norm.}
The joint factorization of coupled matrices and tensors (hereafter coupled tensors) has shown its power in   improving our understanding of the underlying structures in complex data sets  such as  social networks \cite{AcKoDu11}. Early studies on coupled factorization methods are usually nonconvex and the ranks of 
the coupled tensors need to be determined beforehand. 
To overcome the above bottlenecks, the coupled tensor norms \cite{signoretto2010nuclear,wimalawarne2014multitask,wimalawarne2018convex} were proposed for structured tensor computation, which are convex approaches for coupled tensor decomposition. 


\section{Preliminaries}
\textbf{Tensors.}  
For an $N$-way tensor $\IX\in\IR^{I_1\times\cdots\times I_{N}}$, the mode-$n$ unfolding   matrix is $X_{(n)}\in\IR^{I_n\times J_n}$, where $J_n=\prod\limits_{i=1,i\neq n}^N I_i$.  
Further, the Tucker decomposition of $\IX$ is 
\begin{equation*}
	\IX = \mathcal{G}\times_1 U_1\times_2 U_2\times_3 \dots \times_N U_N,
\end{equation*}
or equivalently,
\begin{equation*}
	X_{(n)} = U_n G_{(n)}\left(U_N\otimes \dots \otimes U_{n+1}\otimes U_{n-1}\dots \otimes U_1\right)^\top,  
\end{equation*}
where  $U_n\in\IR^{I_n\times R_n}$ for $n=1,\dots,N$ are the factor matrices (which are usually orthogonal) and $\mathcal{G}\in\IR^{R_1\times\dots\times R_N}$ is the core tensor.  The multilinear rank of $\IX$ is defined as $(R_1,\cdots,R_N)$. Here, $R_n$ is also the rank of the mode-$n$ unfolding matrix of $\IX$. 
We refer to the reviewer paper \cite{kolda2009} for more details.

%



\textbf{Coupled tensor norm.}
Suppose the $n$-the order of tensor $\IX\in\IR^{I_1\times\cdots\times I_{N}}$ shares the same feature with the rows of   a matrix $A\in \IR^{I_n\times J}$.  
The coupling of $\IX$ and $A$ can provide more information on the features. 
 Please see Figure \ref{fig:coupled} for an example that a third order   tensor and  a matrix are coupled at mode-1. 
The  coupled low-rank decomposition of $\IX$ and $A$ 
seeks for the factors $U_i\in\IR^{I_i\times R_i}$ for $i=1,\dots,N$ and $V\in\IR^{J\times R_n}$ such that 
\begin{align*}
	\IX \approx \mathcal{G}\times_1 U_1\times_2 U_2\times_3 \dots \times_N U_N \text{ and } A \approx   U_nV^{\top}. 
\end{align*} 
Note that $U_n$ is shared between $\IX$ and $A$ with a  coupled rank   $R_n$. 
However, the above equation needs to specify the rank   beforehand. 
To overcome this difficulty, \cite{wimalawarne2018efficient} proposed the coupled tensor norm of $[\IX,A]$ based on the overlapped approach, i.e., 
\begin{align}\label{eq:couplednorm}
	\|\IX,A\|_{\hbox{c}}:=\|X_{(n)},A\|_*+\sum\limits_{i=1,i\ne n}^N \|X_{(i)}\|_{*},
\end{align}
which arises from convex approaches for low-rank decomposition of coupled tensor and matrix.
Here,  the nuclear
norm function is  convex but nonsmooth. More specifically, for matrix $A$  with thin SVD decomposition $A=USV^\top$, its subgradient is 
\begin{align*}
	\partial \|A\|_{*}=\{UV^\top+Z\mid U^{\top}Z=0,ZV=0,\|Z\|\le 1\}.
\end{align*}  
Note that \eqref{eq:couplednorm}  does not need to specify the rank   beforehand and can  obtain the low-rank property automatically. 

\begin{figure}
	\centering
	\includegraphics[width=0.15\textwidth]{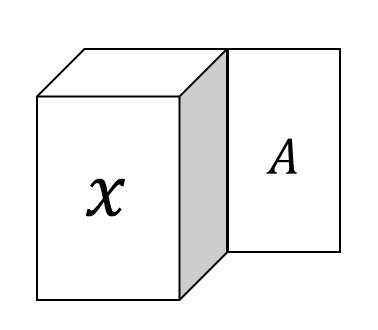}
	\caption{Illustration of the coupling between a tensor $\IX$ and  a matrix $A$ at the first mode.}
	\label{fig:coupled}
\end{figure}

\textbf{Variational analysis.} 
Let $f:\IR^n\to \IR\cup\{+\infty\}$ be an extended function. $f$ is called regular at $\bar{x}$ if the set $\hbox{epi($f$)}$ is Clarke regular at $(\bar{x},f(\bar{x}))$. 
Here, $\hbox{epi}(f)=\{(x,r)\mid f(x)\le r  \}$ is the epigraph of $f$. Furthermore, if each $f_i\; (1\le i\le n)$ is regular at $\bar{x}$, then $f=\sum\limits_{i=1}^n f_i$ is regular at $\bar{x}$ and 
\begin{equation}\label{equ:subgrad_sum}
    \partial f(\bar{x})=\partial f_1(\bar{x})+\cdots+\partial f_n(\bar{x}).
\end{equation} 
The interested reader can refer to, e.g., \cite{rockafellar2009variational}, for more properties of Clarke regular and regularity.


\section{Method}
Consider the multi-classification with $n$ samples and $c$ disjoint classes. Let   $S=\{(\IX_i,y_i)\}_{i=1}^n$ denote independent and identically distributed training dataset, where $\IX_i\in\IR^{I_1\times\dots\times I_N}$ is the input data information and $y_i\in\IR^c$ is the corresponding label. Denote $m=\prod_{i=1}^N I_i$ as the number of input features. 
Here, $y_i=(y_i^{(1)},y_i^{(2)},\cdots,y_i^{(c)})^\top$ is the one-hot vector with  $y_i^{(k)}=1$ if  $\IX_i$ belongs to the $k$-th class  and $y_i^{(k')}=0$ otherwise. For convenience, let $\IX\in\IR^{n\times I_1\times\dots\times I_N}$ and $Y\in\IR^{n\times c}$ denote  the concatenation of $\IX_i$ and $y_i$ at the first mode, respectively. 
Also, denote   $f_{\theta}(\IX_i)\in\IR^c$ as the output feature of $\IX_i$.  
The loss function of the classification model is 
\begin{align}\label{eq:L}
	\min\limits_{\theta}\; L(\theta), \ 
	\text{where }  L(\theta)=\frac1n\sum_{i=1}^n l(f_{\theta}(\IX_i), y_i), 
\end{align}
where $l(\cdot)$ is the loss function, such as the cross entropy loss. 

Denote $f_{\theta}(\IX) \in \IR^{n\times c}$ as the concatenation of $f_{\theta}(\IX_i)$ at the first mode. As discussed in \cite{LDMNet}, the concatenation $\IX$ and $f_{\theta}(\IX)$ should sample a collection of low dimensional manifolds to reduce the risk of overfitting. 
This idea is due to the Gaussian mixture model: the tuples $\{(\IX_i,f_{\theta}(\IX_i))\}_{i=1}^n$ are generated by a mixture of low dimensional manifolds. 
Namely, suppose the coupled decomposition is
\begin{align*} 
	\IX \approx \mathcal{G}\times_1 U_1\times_2 U_2\times_3 \dots \times_{N+1} U_{N+1} \text{ and } f_{\theta}(\IX) \approx   U_1V^{\top},
\end{align*} 
where $U_1\in\mathbb{R}^{n\times R_1}$, $U_i\in\mathbb{R}^{I_{i-1}\times R_i}$ for $i=2,\dots,N+1$, and $V\in\mathbb{R}^{c\times R_1}$. 
Then the joint rank $R_1$ in the above formulation 
should be small. 
However, we do not know the joint rank in advance. 
Hence, the coupled tensor norm based on the overlapped approach  in \eqref{eq:couplednorm} is adopted to characterize low-rankness, and we have 
\begin{align}\label{ctn}
	\|\IX,f_{\theta}(\IX)\|_{c}=\|X_{(1)},f_{\theta}(\IX)\|_{*}+\sum\limits_{i=2}^{N+1}\|X_{(i)}\|_{*}.
\end{align}
Note that only the first term depends on $\theta$. By ignoring the terms independent of $\theta$, we propose the following regularization function, 
\begin{align}\label{eq:R}
	R(\theta)=\|X_{(1)},f_{\theta}(\IX)\|_*. 
\end{align}
Consequently, the classification model \eqref{eq:L} can be formulated as  the following general minimization problem, 
\begin{align}\label{eq:J+R}
	\min\limits_{\theta}\; L(\theta)+\lambda R(\theta), 
\end{align}
where $\lambda>0$ is the parameter.

 In the following theorem, we show the properties of the matrix concatenation function. 
\begin{theorem}\label{thm:XY_nuclear}
	Let  $X\in\mathbb R^{n\times m}$ and $\xi\in\mathbb R^{n\times c}$. Then the matrix row concatenation function $g(\xi)=\|X,\xi\|_{*}$ is not a norm. Further, it is a convex but nonsmooth function of $\xi$,  and   the subgradient of $g(\xi)$ is 
	\begin{equation*}
		\partial g(\xi) = \{UV_2^{\top}+Z_2\,|\, U^{\top}Z=0, ZV=0, \|Z\|_2\le 1\},
	\end{equation*}
	where $U\Sigma V^{\top}$ is the thin SVD of $[\IX,\xi]$, $r$ is the rank of $[\IX,\xi]$,  $U\in\IR^{n\times r}$, $\Sigma\in\IR^{r\times r}$, $V\in\IR^{(m+c)\times r}$,  $V_2\in\IR^{c\times r}$ is the last $c$ rows of $V$, and $Z_2\in\IR^{n\times c}$ is the last $c$ columns of $Z\in\IR^{n\times (m+c)}$.  
\end{theorem}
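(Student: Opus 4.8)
The plan is to recognize $g$ as the composition of the convex nuclear norm with an affine map and then invoke the subdifferential chain rule for such compositions. First I would write the concatenation as $M(\xi):=[X,\xi]=[X,0]+B(\xi)$, where $B:\mathbb{R}^{n\times c}\to\mathbb{R}^{n\times(m+c)}$ is the linear embedding $B(\xi)=[0,\xi]$ that places $\xi$ into the last $c$ columns. Since $\|\cdot\|_*$ is a norm on $\mathbb{R}^{n\times(m+c)}$ it is convex, and $g=\|\cdot\|_*\circ M$ is therefore a convex function of $\xi$, being a convex function precomposed with an affine map.

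For the claim that $g$ is not a norm, it suffices to evaluate $g(0)=\|[X,0]\|_*=\|X\|_*\neq 0$ whenever $X\neq 0$, which already violates the defining axiom $g(0)=0$; positive homogeneity fails for the same reason, since $g(\alpha\xi)\neq|\alpha|\,g(\xi)$ in general.

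The heart of the proof is the subgradient formula. Because $\operatorname{dom}\|\cdot\|_*$ is the whole space, the relative-interior constraint qualification for the affine chain rule holds automatically, giving $\partial g(\xi)=B^{*}\,\partial\|\cdot\|_*\!\left(M(\xi)\right)$, where $B^{*}$ is the adjoint of $B$ under the Frobenius inner product. The key bookkeeping step is to identify $B^{*}$: from $\langle W,B(\xi)\rangle=\langle W,[0,\xi]\rangle=\langle W_2,\xi\rangle$, where $W_2$ denotes the last $c$ columns of $W$, one reads off $B^{*}(W)=W_2$. Substituting the known subgradient $\partial\|\cdot\|_*(M)=\{UV^{\top}+Z\mid U^{\top}Z=0,\ ZV=0,\ \|Z\|_2\le1\}$ with $M=U\Sigma V^{\top}$ the thin SVD of $[X,\xi]$, I would then verify that the last $c$ columns of $UV^{\top}$ are exactly $UV_2^{\top}$ (since the last $c$ rows of $V$ form $V_2$) and that the last $c$ columns of $Z$ are $Z_2$. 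This produces precisely $\partial g(\xi)=\{UV_2^{\top}+Z_2\mid U^{\top}Z=0,\ ZV=0,\ \|Z\|_2\le1\}$.

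Nonsmoothness then follows from this formula: when $[X,\xi]$ is rank deficient, the feasible set for $Z$ contains nonzero matrices, and one can exhibit admissible $Z$ with differing last-$c$-column blocks $Z_2$, so $\partial g(\xi)$ is a nontrivial set rather than a singleton and $g$ fails to be differentiable there. The step I expect to demand the most care is the adjoint/bookkeeping computation — correctly tracking which submatrices of $UV^{\top}+Z$ survive projection onto the last $c$ columns — whereas the chain rule and its constraint qualification are automatic here since the nuclear norm is finite everywhere.
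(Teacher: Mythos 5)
Your proof takes essentially the same route as the paper's: both rewrite $[X,\xi]$ as an affine function of $\xi$ (the paper writes $[X,\xi]=\xi A+[X,O]$ with $A=[O,I]$, you write $[X,0]+B(\xi)$), deduce convexity from composition of the nuclear norm with this affine map, and obtain the subgradient by the chain rule, your adjoint identification $B^{*}(W)=W_2$ being exactly the paper's right-multiplication by $A^{\top}$. Your treatment is if anything slightly more careful — you check the constraint qualification and sketch why $g$ can fail to be differentiable, which the paper merely asserts — though your nonsmoothness sketch should note that admissible $Z$ with $Z_2\neq 0$ exist only when $r<n$ \emph{and} the span of the last $c$ coordinate directions is not contained in the column space of $V$; for instance, if $X$ has full row rank $n$ then $U^{\top}Z=0$ forces $Z=0$, the subdifferential is always a singleton, and $g$ is in fact differentiable everywhere.
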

\begin{proof}
	
	Note that when $\xi$ is equal  to zero, $g(\xi)$ may not equal zero. Hence, $g(\xi)$ is not a norm. 
	
	Further, by rewriting the concatenation of $X$ and $\xi$ as a linear function of $\xi$, 
	\begin{align}\label{equ}
		[X,\xi]=\xi A+[X,O],
	\end{align} 
	where $A=[O,I]$, $O$ is the $c$-by-$m$ zero matrix, and $I$ is the $c$-by-$c$ identity matrix, we have that $g(\xi)$ is the composition of  convex and linear functions. Hence,  $g(\xi)$ is convex consequently.  
	By the thin SVD decomposition $USV^{\top}$ of matrix $[X,\xi]$, the subdifferential of the nuclear norm   at $[X,\xi]$ is  
	\begin{align*}
		\partial \|X,\xi\|_{*}=\{UV^\top+Z\mid U^{\top}Z=0,ZV=0,\|Z\|\le 1\}.
	\end{align*}
	Through the chain rule and equation \eqref{equ}, we have 
	\begin{align}\label{partialg}
		\partial g(\xi)=\frac{\partial g(\xi)}{\partial [X,\xi]}\cdot\frac{\partial[X,\xi]}{\partial \xi}&=(UV^{\top}+Z)A^\top\nonumber\\
		&=UV^{\top}_2+Z_2,
	\end{align}
	where $V_2$ is the last $c$ rows of $V$ and  $Z_2$ is the last $c$ columns of $Z$. This completes the proof.
\end{proof}

Based on Theorem~\ref{thm:XY_nuclear}, we can analyze the properties of our proposed regularizer, which is the  composition of matrix nuclear norm, matrix concatenation, and classification model. 
\begin{theorem}\label{thm:3.2}
	Let  $R(\theta)$ be the regularizer defined by \eqref{eq:R}.  Then, 
	\begin{itemize}
		\item[(i)] if $f_{\theta}(\IX)$ is a linear function with respect to $\theta$, then $R(\theta)$ is   convex. Further, if $f_{\theta}(\IX)=X_{(1)}\theta$ or $f_{\theta}(\IX)=X_{(1)}\theta^{\top}$, then $R(\theta)$ is also smooth; 
		
		\item[(ii)]  if $f_{\theta}(\IX)$ is a general nonconvex and nonsmooth function with respect to $\theta$, then $R(\theta)$ is nonconvex and nonsmooth.
	\end{itemize}
\end{theorem}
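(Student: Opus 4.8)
The plan is to treat $R$ throughout as the composition $R(\theta)=g(f_\theta(\IX))$, where $g(\xi)=\|X_{(1)},\xi\|_*$ is the convex, nonsmooth map analyzed in Theorem~\ref{thm:XY_nuclear}, and to exploit the fact that in the two smooth cases $f_\theta(\IX)$ shares the left factor $X_{(1)}$ with the data. Convexity in part (i) is then immediate: if $f_\theta(\IX)$ is affine in $\theta$, then $R=g\circ f_\theta$ is the composition of the convex function $g$ with a linear map, hence convex.

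For smoothness under $f_\theta(\IX)=X_{(1)}\theta$, I would first rewrite the concatenation as $[X_{(1)},X_{(1)}\theta]=X_{(1)}[I_m,\theta]$, so that $R(\theta)=\|X_{(1)}[I_m,\theta]\|_*$. Factoring $X_{(1)}=Q\tilde X$ with $Q\in\IR^{n\times r}$ having orthonormal columns spanning the range of $X_{(1)}$ and $\tilde X=Q^\top X_{(1)}\in\IR^{r\times m}$ of full row rank $r=\mathrm{rank}(X_{(1)})$, and using that left multiplication by $Q$ preserves singular values, I obtain $R(\theta)=\|\tilde X[I_m,\theta]\|_*$. The decisive computation is
\begin{equation*}
\tilde X[I_m,\theta]\,[I_m,\theta]^\top\tilde X^\top=\tilde X(I_m+\theta\theta^\top)\tilde X^\top\succeq \tilde X\tilde X^\top\succ 0,
\end{equation*}
so this $r\times r$ Gram matrix is positive definite for every $\theta$, with eigenvalues bounded below by $\lambda_{\min}(\tilde X\tilde X^\top)>0$. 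Hence $R(\theta)=\mathrm{tr}\big((\tilde X(I_m+\theta\theta^\top)\tilde X^\top)^{1/2}\big)$, a composition of the polynomial map $\theta\mapsto \tilde X(I_m+\theta\theta^\top)\tilde X^\top$ (valued in the open positive-definite cone) with the matrix square root and the trace, both $C^\infty$ on that cone; therefore $R$ is smooth. The case $f_\theta(\IX)=X_{(1)}\theta^\top$ is identical after replacing $\theta$ by $\theta^\top$.

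For part (ii) I read the claim existentially — that in general neither convexity nor smoothness need survive — and would prove it by a minimal counterexample. The mechanism I would use is to choose the data $X_{(1)}$ and a nonconvex, nonsmooth (e.g. ReLU-built) output $f_\theta(\IX)$ so that $[X_{(1)},f_\theta(\IX)]$ has a block-diagonal singular structure and the coupled norm collapses, under a scalar parametrization, to $R(\theta)=\mathrm{const}+|h(\theta)|$ with $h$ a nonconvex, nonsmooth function such as a negated tent map. Then nonconvexity and nondifferentiability of $R$ are inherited directly from those of $|h|$, exhibiting a single instance in which $R$ is simultaneously nonconvex and nonsmooth.

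The main obstacle is the smoothness assertion in part (i): the nuclear norm is generically nonsmooth exactly at rank-deficient matrices, and $X_{(1)}$ itself may be rank deficient, so one cannot simply invoke differentiability of $\|\cdot\|_*$. The crux is to show that along the image of these special linear maps the matrix $[X_{(1)},X_{(1)}\theta]$ keeps a constant number $r$ of nonzero singular values, all uniformly bounded away from zero; the reduction to the full-row-rank factor $\tilde X$ together with the bound $\tilde X(I_m+\theta\theta^\top)\tilde X^\top\succeq\tilde X\tilde X^\top\succ0$ is precisely what rules out the rank drops that would create kinks. A secondary subtlety, relevant to part (ii), is that $g$ can convexify or smooth out the inner map (for instance a squared ReLU is already $C^1$), so the counterexample must be engineered so that the nonconvex, nonsmooth behavior genuinely propagates through the nuclear norm.
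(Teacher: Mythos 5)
Your proposal is correct, and for the smoothness claim in part (i) it takes a genuinely different route from the paper. The convexity argument coincides: both you and the paper observe that $R$ is the composition of the convex map $g(\xi)=\|X_{(1)},\xi\|_*$ of Theorem~\ref{thm:XY_nuclear} with a linear map of $\theta$. For smoothness, the paper stays inside subdifferential calculus: writing the thin SVD $[X_{(1)},f_\theta(\IX)]=USV^\top$, so that $X_{(1)}=USV_1^\top$, it applies the chain rule to get (for $f_\theta(\IX)=X_{(1)}\theta$) that every subgradient has the form $X_{(1)}^\top(UV_2^\top+Z_2)=V_1SU^\top(UV_2^\top+Z_2)=V_1SV_2^\top$, where the condition $U^\top Z=0$ annihilates the $Z_2$ term; the subdifferential of the convex function $R$ is therefore a singleton at every $\theta$, which gives differentiability. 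Your argument instead factors $X_{(1)}=Q\tilde X$ with $\tilde X$ of full row rank, and represents $R(\theta)=\mathrm{tr}\bigl((\tilde X(I_m+\theta\theta^\top)\tilde X^\top)^{1/2}\bigr)$ together with the bound $\tilde X(I_m+\theta\theta^\top)\tilde X^\top\succeq\tilde X\tilde X^\top\succ0$, then invokes smoothness of the matrix square root on the open positive-definite cone. What each buys: the paper's computation produces the closed-form gradients $V_1SV_2^\top$ and $V_2SV_1^\top$, which are exactly what is reused in Lemma~\ref{diffR} and in the gradient iteration \eqref{GDL}, whereas your route gives no explicit gradient but yields the stronger conclusion that $R$ is $C^\infty$ and makes explicit why possible rank deficiency of $X_{(1)}$ creates no kinks (the uniform positive-definite lower bound), a point the paper's proof leaves implicit. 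On part (ii) you are, if anything, more careful than the paper: the paper offers no argument at all, merely restating the claim, while you correctly note that the statement can only hold existentially (a convex outer map can smooth or convexify a badly behaved inner map) and outline a counterexample; to make that part fully rigorous you would still need to exhibit one concrete $X_{(1)}$ and parametrization realizing $R(\theta)=\mathrm{const}+|h(\theta)|$ with $h$ nonconvex and nonsmooth, but the sketch already exceeds the paper's level of justification.
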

\begin{proof}
If  $\xi=f_{\theta}(\IX)$ in Theorem~\ref{thm:XY_nuclear}  is a linear function with respect to $\theta$, then   $R(\theta)$ is the composition of convex and linear functions, and is convex consequently. On the other hand, when $f_{\theta}(\IX)$ is a general nonconvex and nonsmooth function with respect to $\theta$, $R(\theta)$ is nonconvex and nonsmooth. Furthermore, based on SVD decomposition $USV^{\top}$ of matrix $[X_{(1)},f_\theta(\IX)]$, we have
\begin{align*}
	X_{(1)}=USV_1^{\top},\;\; f_{\theta}(\IX)=USV_2^{\top},
\end{align*}
where $V_1,V_2$ are the front $m$ rows and the last $c$ rows of $V$, respectively.  
From \eqref{partialg} in the proof of Theorem~\ref{thm:XY_nuclear}, we have if $f_{\theta}(\IX)=X_{(1)}\theta$,
\begin{align*}
	\nabla R(\theta)=V_1SU^{\top}(UV_2^{\top}+Z_2)=V_1SV_2^{\top},
\end{align*}
and if $f_{\theta}(\IX)=X_{(1)}\theta^{\top}$, 
\begin{align} \label{XW^T}
	\nabla R(\theta)=(UV_2^{\top}+Z_2)^{\top}USV_1^{\top}=V_2SV_1^{\top}.
\end{align}
Hence, $R(\theta)$ is smooth in these two cases. The proof is completed. 
\end{proof}

\subsection{Multinomial logistic regression}
Multinomial logistic regression (MLR) is a  classical learning method for classification.
Let $x_i\in\IR^m$ be the vectorization of the input tensor $\IX_i$ and $X\in\IR^{n\times m}$ denote  all input features of $n$ training samples which is the unfolding matrix of   $\IX$ at the first mode. The probability   that $x_i$ belongs to the $k$-th class is modeled by the softmax function as follows,
\begin{align*}
	P(y_i^{(k)}=1\mid x_i,W):=\frac{\exp(w_k^\top x_i)}{\sum\limits_{j=1}^c \exp(w_j^\top x_i)},
\end{align*}
where $w_j\in\IR^m$ is the $j$-th weight vector, $W:=(w_1,w_2,\cdots\\, w_c)^\top\in\IR^{c\times m}$ is the weight matrix to be estimated from the training set. 
Furthermore, MLR   adopts the maximum likelihood estimation to estimate the weight matrix $W$. The corresponding loss function is 
\begin{align*}
	L(W)=-\frac1n\sum\limits_{i=1}^n\big(\sum\limits_{j=1}^c y_i^{(j)}w_j^\top x_i-\log\sum\limits_{j=1}^c \exp(w_j^\top x_i)\big),
\end{align*}
which is corresponding to equation \eqref{eq:L} with $\theta=\{W\}$, $f_\theta(X)$ being a linear formulation $XW^{\top}$, and $l(\cdot)$ being the cross entropy loss. 

The above   MLR model may  fail to generalize well on the testing dataset when the number of data $n$ is much smaller than the number of data features $m$ \cite{ZHANG2022148}. 
To improve the generalization, the proposed model \eqref{eq:J+R} can be written as 
\begin{align}\label{MLR_R}
	\min\limits_{W\in\IR^{c\times m}}\;G(W):= L(W)+\lambda\|X,XW^\top\|_{*}.
\end{align}

It is well known that the nuclear norm function itself is nonsmooth. However, the coupled nuclear norm proposed in this context is differentiable with respect to $W$. More details are presented in the following lemma.

\begin{lemma}\label{diffR}
	Let regularization term $R(W)=\|X,XW^\top\|_{*}$ in model \eqref{MLR_R}, and $USV^\top$ be its SVD decomposition. Divide  $V$ as $[V_1,V_2]$ with $V_1\in\IR^{n\times m}$ and $V_2\in\IR^{n\times c}$.  Then we have
	\begin{itemize}
		\item[(i)]  $R(W)$ is differentiable with respect to $W$  and its gradient is $V_2SV_1^\top$; 
		\item[(ii)]  $\nabla R(W)$ is Lipschitz continuous if all singular values of matrix $XW^\top$ are nonzeros. 
	\end{itemize}
\end{lemma}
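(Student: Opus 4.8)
The plan is to handle the two parts in turn, leaning on Theorem~\ref{thm:XY_nuclear} and the gradient computation already carried out inside the proof of Theorem~\ref{thm:3.2}, and then to reduce the Lipschitz claim to a smoothness statement about a matrix inverse square root.

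For part (i) I would note that $R(W)=g(XW^{\top})$ with $g(\xi)=\|X,\xi\|_{*}$ as in Theorem~\ref{thm:XY_nuclear}, and that the inner map $W\mapsto XW^{\top}$ is linear with adjoint $G\mapsto G^{\top}X$. Since $g$ is finite-valued and convex, the composition rule gives $\partial R(W)=\{\,G^{\top}X: G\in\partial g(XW^{\top})\,\}$. Substituting $G=UV_2^{\top}+Z_2$ from Theorem~\ref{thm:XY_nuclear} and using $U^{\top}Z=0$ together with $X=USV_1^{\top}$, read off from the thin SVD $USV^{\top}$ of $[X,XW^{\top}]$, every element of this set collapses to the single matrix $V_2SV_1^{\top}$, exactly as in the proof of Theorem~\ref{thm:3.2}. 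A convex function whose subdifferential is a singleton is differentiable, which yields both differentiability and the formula $\nabla R(W)=V_2SV_1^{\top}$.

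For part (ii) the first step is to rewrite this SVD expression in a form that displays its dependence on $W$. Setting $H(W):=[X,XW^{\top}][X,XW^{\top}]^{\top}=XX^{\top}+XW^{\top}WX^{\top}$ and using $H(W)=US^{2}U^{\top}$, one checks the identities $WX^{\top}=V_2SU^{\top}$ and $X=USV_1^{\top}$, whence $\nabla R(W)=WX^{\top}H(W)^{-1/2}X$; equivalently $R(W)=\mathrm{tr}\,H(W)^{1/2}$. In this representation the only non-polynomial ingredient is the matrix square root, and the hypothesis that the singular values of $XW^{\top}$ are nonzero serves precisely to keep the relevant part of the spectrum of $H(W)$ bounded away from $0$ (recall $H(W)\succeq XX^{\top}$), so that $H\mapsto H^{-1/2}$ stays in the region where it is smooth.

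The heart of the argument, and where I expect the real effort, is the Lipschitz estimate for $W\mapsto H(W)^{-1/2}$. I would invoke the Daleckii--Krein integral formula for the Fr\'echet derivative of the operator function $t\mapsto t^{-1/2}$, which on $\{H\succeq cI\}$ is bounded in operator norm by $\tfrac12 c^{-3/2}$; feeding in the (locally Lipschitz, quadratic) map $W\mapsto H(W)$ and differentiating the product $\nabla R(W)=WX^{\top}H(W)^{-1/2}X$ then controls the Hessian. The delicate point is that the crude product estimate carries a constant growing with $\|W\|$, so to extract a genuine Lipschitz modulus one must exploit the cancellation that already keeps $\nabla R$ bounded, namely the curvature of the nuclear norm decaying like $1/\|W\|$ along rays; this is automatic on any bounded set, in particular on the level set where the first-order method for \eqref{MLR_R} evolves. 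Without the nonsingularity hypothesis the square root is only H\"older-$\tfrac12$ near a singular $H(W)$ and the estimate breaks down, which is exactly why that assumption is imposed.
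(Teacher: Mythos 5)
Your part (i) is correct and is essentially the paper's own route: the paper also obtains $\nabla R(W)=V_2SV_1^\top$ by specializing the chain-rule computation \eqref{XW^T} from Theorem~\ref{thm:3.2}, and your extra observation---that every element $G^\top X$ with $G=UV_2^\top+Z_2\in\partial g(XW^\top)$ collapses to $V_2SV_1^\top$ because $Z_2^\top U=0$, so the convex function has a singleton subdifferential and is therefore differentiable---is actually a cleaner justification of smoothness than the paper spells out.

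Part (ii) contains a genuine gap, and you flag it yourself without closing it. First, a repairable issue: the representation $\nabla R(W)=WX^\top H(W)^{-1/2}X$ needs $H(W)=XX^\top+XW^\top WX^\top$ to be invertible, i.e.\ $XX^\top\succ 0$; the lemma's hypothesis concerns $XW^\top$, not $X$, and the bound $H(W)\succeq XX^\top$ keeps nothing away from zero when $XX^\top$ is singular, so you would have to restrict everything to the column space of $X$ (which does contain all of $[X,XW^\top]$). Second, and fatally: as you concede, the Daleckii--Krein/product-rule estimate produces a modulus that grows like $\|W\|^2$ (one factor of $\|W\|$ from the leading $W$, one from $\dot H$), and your proposed repair---``exploit the cancellation \ldots\ automatic on any bounded set''---is not a proof of the lemma. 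Lipschitz continuity on bounded sets is only local Lipschitz continuity, strictly weaker than the stated claim; worse, your justification that this suffices (``the level set where the first-order method for \eqref{MLR_R} evolves'') is false in the relevant regime $m>n$: any $W$ whose rows lie in $\ker(X)$ leaves both $L(W)$ and $\|X,XW^\top\|_*$ unchanged, so the level sets of $G$ are unbounded, and the convergence theorem built on this lemma cannot be run on a bounded set. To actually close the argument along your lines you must make the cancellation quantitative, e.g.\ write $\nabla R(W)$ as a fixed linear image of the polar-type factor $UV^\top$ of $M(W)=[X,XW^\top]$ and invoke a perturbation bound of the form $\|UV^\top-\hat U\hat V^\top\|_F\le \tfrac{2}{\sigma_r(M)+\sigma_r(\hat M)}\|M-\hat M\|_F$ for matrices of equal rank, noting that every nonzero singular value of $M(W)$ is at least the smallest nonzero singular value of $X$, uniformly in $W$. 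The paper avoids matrix-function calculus entirely: it factors $\nabla R(W)-\nabla R(\hat W)=CB$ with $\|C\|$ bounded, then lower-bounds $\|EB\|_F\ge\sigma_{\min}(E)\|B\|_F$ with $E=(WX^\top,\hat WX^\top)$, arriving at the constant $\lambda_{\max}(X^\top X)/\sigma_{\min}(E)$; its hypothesis that the singular values of $XW^\top$ are nonzero enters precisely to keep $\sigma_{\min}(E)$ positive. Your route and the paper's are genuinely different, but as written yours proves a weaker statement than Lemma~\ref{diffR} claims.
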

\begin{proof}
	The regularization $\|X,XW^{\top}\|_{*}$ in model \eqref{MLR_R} is a special case of \eqref{eq:R}. Hence, we can get  the gradient of $R(W)$ from \eqref{XW^T}, i.e., 
	\begin{align*}
		\nabla R(W)=V_2SV_1^{\top}.
	\end{align*}

	Furthermore, for any matrices $W,\hat W\in\IR^{c\times m}$, we   have 
	\begin{align*}
		\|\nabla R(W)-\nabla R(\hat{W})\|_{F}
		=\|V_2 S V_1^\top - \hat{V}_2^\top\hat{S}\hat{V}_1^\top\|_{F}=\|CB\|_{F},
	\end{align*}
	where $B=\begin{pmatrix}USV_1^\top\\ -\hat{U}\hat{S}\hat{V}_1^\top \end{pmatrix}$, $C=(V_2U^\top,\hat{V}_2^\top\hat{U}^\top)$. Owing to the fact that the largest singular value of $C$ satisfies $\sigma_{\max}(C)\le 1$, we have 
	\begin{align}\label{nablaR1}
		\|\nabla R(W)-\nabla R(\hat{W})\|_{F}\le\|B\|_{F}.
	\end{align}
	By denoting $E=\begin{pmatrix}
		V_2SU^\top,\hat{V}_2\hat{S}\hat{U}^\top
	\end{pmatrix}=\begin{pmatrix} WX^{\top}, \hat W X^{\top}\end{pmatrix}$, we have
	\begin{align}\label{nablaR2}
		&\|X^\top XW^\top -X^\top X\hat{W}^\top\|_{F} \nonumber \\
		=&\|V_2 S^2 V_1^\top -\hat{V}_2\hat{S}^2\hat{V}_1^\top\|_{F}\nonumber\\
		=&\|V_2SU^\top USV_1^\top - \hat{V}_2\hat{S}\hat{U}^\top \hat{U}\hat{S}V_{1}^\top\|_{F}\nonumber\\
		=&\|EB\|_{F} \nonumber\\
		\ge & \sigma_{\min}(E)\|B\|_F,
	\end{align}
    where $\sigma_{\min}(E)$ denotes   the smallest singular value of 
 $E$. Under the assumption that all singular values of $XW^{\top}$ is nonzero for all $W$ and the fact that $E$ is the concatenation of $ WX^{\top}$ and $\hat W X^{\top}$, we have $\sigma_{\min}(E)>0$. 
	Combining \eqref{nablaR1} and  \eqref{nablaR2}, we have 
	\begin{align*}
		&\|\nabla R(W)-\nabla R(\hat{W})\|_{F}\\
		\le& \|B\|_{F}\\
		\le & \frac{1}{\sigma_{\min}(E)} \|X^\top XW^\top -X^\top X\hat{W}^\top\|_{F} \\
		\le& \frac{\lambda_{\max}(X^\top X)}{\sigma_{\min}(E)}\|W-\hat{W}\|_{F}.
	\end{align*}
	Hence,  $\nabla R(W)$ is Lipschitz continuous and this  completes the proof. 
\end{proof}

The intrinsic convexity and differentiability of model \eqref{MLR_R} inspire us to utilize 
the classical gradient descent algorithm with linesearch to solve it.

\textbf{Algorithm description.} 
The iterative process for updating $W^{k+1}$ is based  on following procedure, 
\begin{align}\label{GDL}
	W^{k+1}=W^k+\alpha_k\nabla G(W^k).
\end{align}
Here, $\alpha_k$ is obtained by a linesearch 
algorithm with guaranteed sufficient decrease which was introduced in \cite{more1994line}.


For a given threshold $\varepsilon\ge0$, the  termination criterion is $\|\nabla G(W)\|_2\le\varepsilon$. The convergence analysis is illuminated in the next theorem.
\begin{theorem}
	Suppose all singular values of output matrix $XW^\top$ are nonzeros. For the convex differential 
	minimization problem \eqref{MLR_R}, each accumulation point of the iterative sequence $\{W^k\}_{k=0}^\infty$ generated by procedure \eqref{GDL} is a global minimizer.
\end{theorem}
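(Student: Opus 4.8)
The plan is to recognize \eqref{MLR_R} as the unconstrained minimization of a convex, continuously differentiable objective with Lipschitz continuous gradient, and then to invoke the classical convergence theory for line-search steepest descent. Two ingredients will suffice: first, that $G$ enjoys these regularity properties; and second, that for a convex $G$ the first-order condition $\nabla G(W^*)=0$ is both necessary and sufficient for global optimality. The target therefore reduces to showing that every accumulation point of $\{W^k\}$ is a stationary point of $G$.

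For the regularity, I would write $G(W)=L(W)+\lambda R(W)$ and treat the two summands separately. By Lemma~\ref{diffR}, under the stated hypothesis that all singular values of $XW^\top$ are nonzero, $R(W)$ is differentiable with $\nabla R(W)=V_2SV_1^\top$ and $\nabla R$ is Lipschitz continuous; by Theorem~\ref{thm:3.2}(i), $R$ is convex since $f_W(\IX)=XW^\top$ is linear in $W$. The MLR loss $L(W)$ is the composition of the affine map $W\mapsto XW^\top$ with the log-sum-exp--based cross entropy, hence convex and twice continuously differentiable; moreover its Hessian is uniformly bounded because the softmax probabilities lie in $[0,1]$ and $X$ is fixed, so $\nabla L$ is globally Lipschitz. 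Summing, $G$ is convex, continuously differentiable, and $\nabla G=\nabla L+\lambda\nabla R$ is Lipschitz continuous.

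The convergence argument then proceeds through the Zoutendijk condition. Reading the update \eqref{GDL} as steepest descent along $d^k=-\nabla G(W^k)$ with step $\alpha_k$ chosen by the Mor\'e--Thuente line search of \cite{more1994line}, the sufficient decrease guarantee makes $\{G(W^k)\}$ nonincreasing, so all iterates stay in the level set $\{W:G(W)\le G(W^0)\}$, on which $\nabla G$ is Lipschitz. The (strong) Wolfe conditions enforced by the line search then yield $\sum_{k}\cos^2\theta_k\,\|\nabla G(W^k)\|_F^2<\infty$, where $\theta_k$ is the angle between $d^k$ and $-\nabla G(W^k)$. For steepest descent $\cos\theta_k\equiv 1$, so $\sum_k\|\nabla G(W^k)\|_F^2<\infty$ and hence $\nabla G(W^k)\to 0$. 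If $W^*$ is any accumulation point, say $W^{k_j}\to W^*$, continuity of $\nabla G$ gives $\nabla G(W^*)=\lim_j\nabla G(W^{k_j})=0$. Finally, convexity yields $G(W)\ge G(W^*)+\langle\nabla G(W^*),W-W^*\rangle=G(W^*)$ for every $W$, so $W^*$ is a global minimizer.

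The main obstacle I anticipate is cleanly establishing that $\nabla G$ is Lipschitz on the set that actually contains the iterates, since the Lipschitz bound for $\nabla R$ in Lemma~\ref{diffR} rests on the nonzero-singular-value hypothesis through $\sigma_{\min}(E)>0$; keeping the iterates inside a level set where $\sigma_{\min}(E)$ stays bounded away from zero is precisely what makes the Zoutendijk estimate applicable. A minor point worth flagging is that the sign in \eqref{GDL} should be interpreted as motion along the descent direction $-\nabla G(W^k)$, and that the statement concerns only accumulation points that are assumed to exist, so no coercivity of $G$ need be proved.
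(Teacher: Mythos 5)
Your proposal is correct and follows essentially the same route as the paper: establish that $G$ is convex with Lipschitz continuous gradient (via Lemma~\ref{diffR} under the nonzero-singular-value hypothesis), conclude from the Wolfe line-search theory that $\|\nabla G(W^k)\|\to 0$, and then pass to accumulation points and use convexity to upgrade stationarity to global optimality. The only difference is cosmetic: where the paper cites the textbook result \cite[Theorem 2.5.7]{sun2006optimization}, you inline its standard Zoutendijk-condition proof, and you additionally make explicit the smoothness of the loss term $L$ and the sign correction in \eqref{GDL}, both of which the paper leaves implicit.
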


\begin{proof}
The iterative procedure \eqref{GDL} is a gradient descent method with the stepsize $\alpha_k$ satisfying the Wolfe-Powell rules, and $\nabla G(W)$ in model \ref{MLR_R} is Lipschitz continuous based on Lemma \ref{diffR}. Hence, as introduced in \cite[Theorem 2.5.7]{sun2006optimization}, for the sequence $\{W^k\}_{k=0}^\infty$ generated by the  gradient descent method with Wolfe linesearch, either $\|\nabla G\\(W^k)\|_{2}=0$ for some $k$  or $\|\nabla G(W^k)\|_{2}\to 0$. It means that each accumulation point of the iterative sequence $\{W^k\}_{k=0}^\infty$ is a stationary point. Furthermore, the stationary point  is also a global minimizer owing to the convexity of model \eqref{MLR_R}. This  completes the proof.
\end{proof} 
\subsection{Deep neural networks (DNN)}
Let $\theta$ be the collection of network weights and bias. For every data $\IX_i$, the classical DNN learns a feature $f_{\theta}(\IX_i)\in\IR^c$ 
by minimizing the empirical loss function on the training data as defined by \eqref{eq:L}. 
To reduce the risk of overfitting of DNN \cite{vapnik1999overview}, we apply the coupled tensor norm regularization \eqref{eq:R} into the  loss function  and propose the regularized DNN model  as
\begin{equation}\label{eq:NN_L}
	\min_{\theta}~L(\theta)+\lambda \| X_{(1)}, f_{\theta}(\IX)\|_*,
\end{equation}
where $f_{\theta}(\IX)\in\IR^{n\times c}$ is   highly nonlinear and nonsmooth. Therefore, from Theorem \ref{thm:3.2}, the regularization term in model \eqref{eq:NN_L} is nonconvex, nonsmooth, and nonseparable.

A basic condition for solving DNN by the stochastic gradient descent (SGD) method is that the objective function is separable. 
To circumvent the nonseparability of \eqref{eq:NN_L}, we introduce an auxiliary variable $\xi$ into \eqref{eq:NN_L} as follows:
\begin{align*}\label{eq:NN_L2}
	\min_{\theta,\xi}~L(\theta)+\lambda \| X_{(1)}, \xi\|_*, 
	\text{  s.t.  } f_{\theta}(\IX) = \xi.
\end{align*}	
	Then we penalize the constraint into the loss function using the quadratic penalty method and  get the following unconstrained model: 
	\begin{equation}\label{eq:NN_L3}
		\min_{\theta,\xi}~\mathcal{L}(\theta,\xi):=L(\theta)+\lambda \| X_{(1)}, \xi\|_* +\frac{\mu}{2} \|f_{\theta}(\IX) - \xi\|^2_F,
	\end{equation}
	where $\mu>0$ is the penalty parameter. 
 Problem \eqref{eq:NN_L3} is solved by  alternating the directions of  $\theta$ and $\xi$. Specifically, given $(\theta^k,\xi^k)$, we implement the following sub-steps:
	\begin{itemize}
		\item Update $\theta^{k+1}$ with the fixed $\xi^k$:
		\begin{equation}\label{eq:NN_theta}
			\min_{\theta}~L(\theta)+\frac{\mu}{2} \|f_{\theta}(\IX) - \xi^k\|^2_F.
		\end{equation}
		\item Update $\xi^{k+1}$ with the fixed  $\theta^{k+1}$:
		\begin{equation}\label{eq:NN_xi}
			\min_{\xi}~\lambda \| X_{(1)}, \xi\|_* +\frac{\mu}{2} \|f_{\theta^{k+1}}(\IX) - \xi\|^2_F.
		\end{equation}
	\end{itemize}
	The $\theta$-subproblem \eqref{eq:NN_theta} is separable with respective to the samples $\IX_i$ and can be solved by SGD.     
	The $\xi$-subproblem  \eqref{eq:NN_xi}  is  strongly convex but   nonsmooth  by Theorem \ref{thm:XY_nuclear}. 
Based on the above analysis, we describe our  algorithm framework for solving \eqref{eq:NN_L3} in Algorithm \ref{alg:DNN}.
\begin{algorithm}
	\caption{The alternating minimization method for  \eqref{eq:NN_L3}}
	\label{alg:DNN}
	\begin{algorithmic}
		\REQUIRE Training data $\{(\IX_i,y_i)\}_{i=1}^n$, hyperparameters $\lambda$ and $\mu$, and a neural network with   initial weight  $\theta^0$.
		\ENSURE Trained network weights $\theta^*$.

		Let $k=0$. 
  $\xi^0\in \IR^{n\times c}$ is initialized as zero.
		\WHILE{not converge}   
			\item 1. Update  $\theta^{k+1}$in\eqref{eq:NN_theta}: solve the nonconvex problem by SGD.
			\item 2. Update $\xi^{k+1}$ in \eqref{eq:NN_xi}: solve the convex problem by the subgradient method.
			\item 3. $k\leftarrow k+1$. 
		\ENDWHILE
		
		$\theta^* = \theta^k$.
	\end{algorithmic}
\end{algorithm}	


Next, we show the global convergence of Algorithm \ref{alg:DNN} based on the K{\L} property and regularity under the following mild assumption.  
\begin{assumption}\label{ass:bound}
Assume the following conditions hold.
	\begin{itemize}
	    \item[(i)] The loss function $\mathcal{L}(\theta,\xi)$ defined by \eqref{eq:NN_L3}  is regular and satisfies the K{\L} property. 
  \item[(ii)] The subgradient of $f_\theta(\IX)$ is upper bounded, i.e., there exists a positive  constant $\rho$ such that for all 
	$g_f\in\partial f_{\theta}(\IX)$, we have $\|g_f\|\le\rho.$
\end{itemize}
\end{assumption}
\begin{theorem}\label{thm:DNN}
	Suppose Assumption \ref{ass:bound} holds.   Let the sequence $\{(\theta^k,\xi^k)\}_{k\ge0}$ be generated by Algorithm \ref{alg:DNN}. Then  $\{\xi^k\}_{k\ge0}$ has finite length and  converges to a point $\xi^*$   globally. Moreover, let $\theta^*$ be any limit point of the sequence $\{\theta^k\}_{k\ge0}$, then $(\theta^*,\xi^*)$ is a critical point of $\mathcal{L}$.
\end{theorem}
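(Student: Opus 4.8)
The plan is to read Algorithm \ref{alg:DNN} as a Kurdyka--{\L}ojasiewicz (K{\L}) based alternating descent scheme and to follow the standard three-ingredient recipe (sufficient decrease, a relative-error subgradient bound, and the K{\L} inequality), but with the crucial feature that both the descent and the subgradient estimate will be controlled \emph{solely} by the increment $\|\xi^{k+1}-\xi^k\|_F$. This is exactly what produces finite length for $\{\xi^k\}$ while the nonconvex, inexactly solved $\theta$-block yields only a subsequential statement. Throughout I use that, by the regularity in Assumption \ref{ass:bound}(i) and the sum rule \eqref{equ:subgrad_sum}, the limiting subdifferential of $\mathcal{L}$ splits blockwise, with $\partial_\theta\mathcal{L}(\theta,\xi)=\partial L(\theta)+\mu\,(\partial f_\theta(\IX))^{\ast}(f_\theta(\IX)-\xi)$ and $\partial_\xi\mathcal{L}(\theta,\xi)=\lambda\,\partial_\xi\|X_{(1)},\xi\|_{*}+\mu(\xi-f_\theta(\IX))$.

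First I would establish sufficient decrease together with the relative-error bound. The $\xi$-subproblem \eqref{eq:NN_xi} is $\mu$-strongly convex (the penalty contributes $\mu I$ and the coupled nuclear norm is convex by Theorem \ref{thm:XY_nuclear}), so its exact minimizer obeys $\mathcal{L}(\theta^{k+1},\xi^k)-\mathcal{L}(\theta^{k+1},\xi^{k+1})\ge\tfrac{\mu}{2}\|\xi^{k+1}-\xi^k\|_F^2$. Since the $\theta$-step \eqref{eq:NN_theta} minimizes $\mathcal{L}(\cdot,\xi^k)$ up to a $\theta$-independent term and is taken to be non-increasing, $\mathcal{L}(\theta^{k+1},\xi^k)\le\mathcal{L}(\theta^k,\xi^k)$, and chaining gives
\begin{equation*}
\mathcal{L}(\theta^{k+1},\xi^{k+1})+\tfrac{\mu}{2}\|\xi^{k+1}-\xi^k\|_F^2\le\mathcal{L}(\theta^k,\xi^k).
\end{equation*}
As $\mathcal{L}\ge0$ is bounded below, summation yields a convergent value sequence and $\sum_k\|\xi^{k+1}-\xi^k\|_F^2<\infty$, hence $\|\xi^{k+1}-\xi^k\|_F\to0$. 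For the subgradient bound, exact minimization in \eqref{eq:NN_xi} gives $0\in\partial_\xi\mathcal{L}(\theta^{k+1},\xi^{k+1})$, while stationarity of the $\theta$-step at $\xi^k$ supplies $g^{k+1}\in\partial L(\theta^{k+1})$ and $J^{k+1}\in\partial f_{\theta^{k+1}}(\IX)$ with $g^{k+1}+\mu(J^{k+1})^{\ast}(f_{\theta^{k+1}}(\IX)-\xi^k)=0$; evaluating the same expression at $\xi^{k+1}$ produces
\begin{equation*}
w_\theta^{k+1}:=\mu(J^{k+1})^{\ast}(\xi^k-\xi^{k+1})\in\partial_\theta\mathcal{L}(\theta^{k+1},\xi^{k+1}).
\end{equation*}
By Assumption \ref{ass:bound}(ii) we have $\|J^{k+1}\|\le\rho$, so $(w_\theta^{k+1},0)\in\partial\mathcal{L}(\theta^{k+1},\xi^{k+1})$ with $\mathrm{dist}(0,\partial\mathcal{L}(\theta^{k+1},\xi^{k+1}))\le\mu\rho\|\xi^{k+1}-\xi^k\|_F$.

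Next I would close the argument with the K{\L} machinery. Writing $z^k=(\theta^k,\xi^k)$, $\mathcal{L}^{*}=\lim_k\mathcal{L}(z^k)$, and $\varphi_k=\varphi(\mathcal{L}(z^k)-\mathcal{L}^{*})$ for a concave desingularizing function $\varphi$ supplied by the uniformized K{\L} property on a neighborhood of the limit set, the inequality $\varphi_k'\,\mathrm{dist}(0,\partial\mathcal{L}(z^k))\ge1$ combined with the two bounds above and concavity of $\varphi$ gives $\|\xi^{k+1}-\xi^k\|_F^2\le 2\rho\,\|\xi^k-\xi^{k-1}\|_F\,(\varphi_k-\varphi_{k+1})$. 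An AM--GM step followed by telescoping then yields $\sum_k\|\xi^{k+1}-\xi^k\|_F<\infty$, so $\{\xi^k\}$ has finite length, is Cauchy, and converges to some $\xi^{*}$. For any limit point $\theta^{*}$ of $\{\theta^k\}$, take a subsequence $(\theta^{k_j},\xi^{k_j})\to(\theta^{*},\xi^{*})$; continuity of $\mathcal{L}$ along it together with $\|w_\theta^{k_j}\|\le\mu\rho\|\xi^{k_j}-\xi^{k_j-1}\|_F\to0$ and the closed-graph (outer semicontinuity) property of $\partial\mathcal{L}$ force $0\in\partial\mathcal{L}(\theta^{*},\xi^{*})$, i.e.\ $(\theta^{*},\xi^{*})$ is critical.

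I expect the main obstacle to lie in the $\theta$-block. The argument silently idealizes the SGD solve of \eqref{eq:NN_theta} as yielding both a genuine non-increase of $\mathcal{L}(\cdot,\xi^k)$ and exact stationarity $0\in\partial_\theta\mathcal{L}(\theta^{k+1},\xi^k)$; these are precisely what let me express the descent and the subgradient estimate through $\|\xi^{k+1}-\xi^k\|_F$ alone, which is the linchpin delivering finite length for $\xi$ while $\theta$ is handled only subsequentially. The remaining technical care is in justifying the blockwise splitting of $\partial\mathcal{L}$ from Assumption \ref{ass:bound}(i) and \eqref{equ:subgrad_sum}, and in the uniformized K{\L} step, which requires the iterate sequence to be bounded so that the limit set is nonempty and compact.
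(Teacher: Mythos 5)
Your proposal is correct and follows essentially the same route as the paper: the same sufficient-decrease inequality obtained from strong convexity of the $\xi$-subproblem plus monotonicity of the $\theta$-step, the same subgradient estimate $\mathrm{dist}\bigl(0,\partial\mathcal{L}(\theta^{k+1},\xi^{k+1})\bigr)\le\mu\rho\|\xi^{k+1}-\xi^k\|_F$ built from exact stationarity of both blocks and the bound on $\partial f_\theta$, and then the K{\L} property to conclude. The only difference is presentational: the paper verifies these as the conditions of an approximate gradient-like descent sequence and invokes Theorem 1 of Gur--Sabach--Shtern as a black box, whereas you unroll the standard K{\L} telescoping argument by hand; the caveats you flag (exact subproblem solves and boundedness of the iterates) are precisely the hypotheses the paper's cited framework also needs.
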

\begin{proof}
	We firstly show
	the sequence $\{(\theta^k,\xi^k)\}_{k\ge0}$ generated by Algorithm \ref{alg:DNN} satisfies the four conditions for the    bounded approximate gradient-like descent sequence in \cite[Definition 2]{gur2022convergent}. 
 
 For the $\theta$ subproblem in \eqref{eq:NN_theta} and the $\xi$ subproblem in \eqref{eq:NN_xi}, we have respectively,
	\begin{align}
	&\mathcal{L}(\theta^{k+1},\xi^k)\le \mathcal{L}(\theta^{k},\xi^k),\label{app1.1}\\
	&0\in \partial_{\xi} \mathcal{L}(\theta^{k+1},\xi^k).\label{app1.2}
\end{align}
	Since $\mathcal{L}(\theta,\xi)$ is strongly convex with respect to $\xi$, we can get that for all  $g^{k+1}_\xi\in\partial_{\xi} \mathcal{L}(\theta^{k+1},\xi^{k+1})$, it holds  that
	\begin{align*}
		\mathcal{L}(\theta^{k+1},\xi^{k+1})+\langle g^{k+1}_{\xi},\xi^k-\xi^{k+1}\rangle&+\frac{\mu}{2}\|\xi^k-\xi^{k+1}\|_{2}^2\\
		&\le \mathcal{L}(\theta^{k+1},\xi^k).
	\end{align*}
	Furthermore, adding \eqref{app1.1} to the above inequality and combining it with \eqref{app1.2} yield
	\begin{align}\label{app1.3}
		\frac{\mu}{2}\|\xi^k-\xi^{k+1}\|_{2}^2\le \mathcal{L}(\theta^k,\xi^k)-\mathcal{L}(\theta^{k+1},\xi^{k+1}).
	\end{align}
	It is clear that condition C1 in \cite[Definition 2]{gur2022convergent} holds.
	
	 Secondly, from \eqref{app1.2}, \eqref{equ:subgrad_sum}, and Assumption \ref{ass:bound}, 
  we can obtain 
	\begin{align*}
		0&\in\partial_{\theta}\mathcal{L}(\theta^{k+1},\xi^k)\\
  &=\partial L(\theta^{k+1}) +\mu\partial f_{\theta^{k+1}}(\IX)^{\top}(f_{\theta^{k+1}}(\IX)-\xi^k)\\
		&=\partial L(\theta^{k+1}) +\mu\partial f_{\theta^{k+1}}(\IX)^{\top}(f_{\theta^{k+1}}(\IX)-\xi^{k+1})\\
		&\quad+\mu\partial f_{\theta^{k+1}}(\IX)^{\top}(\xi^{k+1}-\xi^{k}).
	\end{align*}
     Combing it with \eqref{app1.2}, we get
	\begin{equation*}
		W^{k+1}= \begin{pmatrix}
			\mu g_f^{\top}(\xi^{k}-\xi^{k+1})\\ 
			0
		\end{pmatrix}\in\partial \mathcal{L}(\theta^{k+1},\xi^{k+1}),
	\end{equation*}
	where $g_f\in\partial f_{\theta^{k+1}}(\IX)$.
	Then, with triangle inequality and Assumption \ref{ass:bound}, 
	\begin{equation}\label{app2.2}
		\|W^{k+1}\|\le\mu\|g_f\| \|\xi^{k}-\xi^{k+1}\|\le \mu\rho \|\xi^{k}-\xi^{k+1}\|.
	\end{equation}
	
	At last, if $(\bar\theta,\bar\xi)$ is a limit point of some sub-sequence $\{(\theta^k,\xi^k)\}_{k\in\mathcal{K}\subseteq\mathbb{K}}$, based on 
	the continuity of objective function $L(\theta,\xi)$, we can obtain
	\begin{equation}\label{app3.1}
		\limsup_{k\in\mathcal{K}\subseteq\mathbb{K}} \mathcal{L}(\theta^k,\xi^k)\le \mathcal{L}(\bar\theta,\bar\xi).
	\end{equation}
	Moreover, the condition C4 in \cite[Definition 2]{gur2022convergent} holds clearly when the subproblems \eqref{eq:NN_theta} and \eqref{eq:NN_xi} are solved exactly. Combining \eqref{app1.3}, \eqref{app2.2}, and \eqref{app3.1}, we have   that the sequence $\{(\theta^k,\xi^k)\}_{k\ge0}$ generated by Algorithm \ref{alg:DNN} is an approximate gradient-like descent sequence in \cite{gur2022convergent}. 
 
 Furthermore, let $u:=(\theta,\xi)$ for convenience. The function $\mathcal{L}(u)$  is proper, lower semicontinuous, which has the K{\L} property directly.  
 Hence, we can get the convergence     with the K{\L} property \cite{bolte2007lojasiewicz} and the approximate gradient-like descent sequence by Theorem 1 in \cite{gur2022convergent}.
\end{proof}
\begin{remark}
	Actually, the subproblems \eqref{eq:NN_theta} and \eqref{eq:NN_xi} can also be solved inexactly. We can find approximate solution for \eqref{eq:NN_theta} and \eqref{eq:NN_xi} until the following criteria satisfied,
	\begin{align*}
		\frac{\mu}{2}\|\xi^k-\xi^{k+1}\|_{2}^2-(e_1^k)^2&\le \mathcal{L}(\theta^k,\xi^k)-\mathcal{L}(\theta^{k+1},\xi^{k+1}), \\
		\|W^{k+1}\|-e_2^k &\le \mu\rho \|\xi^{k}-\xi^{k+1}\|,
	\end{align*}
	where $\{e_1^k\}_{k\ge0}$ and $\{e_2^k\}_{k\ge0}$ are required to be summable. 
	Together with \eqref{app3.1}, the sequence $\{(\theta^k,\xi^k)\}_{k\ge0}$ is also an approximate gradient-like descend sequence defined in \cite{gur2022convergent}.
\end{remark}

\begin{remark}
	By \cite{jiang2022optimality}, the assumption that $\mathcal{L}$ is regular near  $(\theta^*,\xi^*)$ can be satisfied for some deep neural networks with the ReLU activation function. 
\end{remark}

\section{Numerical Experiments}
In this  section, we verify the efficiency  of our proposed coupled tensor norm regularization for both MLR and DNN on nine real datasets listed in Table \ref{tab:datasets}. 
We first test the performance of MLR on three face image datasets (ORL, Yale, AR10P) and three biological datasets (lung, TOX-171, lymphoma)  downloaded online  \footnote{\url{https://jundongl.github.io/scikit-feature/datasets.html}}. 
Then we test the performance of DNN on  Fashion-MNIST, CIFAR-10, and an MRI dataset (Brain Tumor) \footnote{\url{https://www.kaggle.com/competitions/machinelearninghackathon/data}}.  

\begin{table}[t!]
	\setlength{\abovecaptionskip}{0.1cm} 
	\caption{Details of all datasets. $n$ and $n'$ are the number of training and  testing samples, respectively.  $m$ denotes the number of features and  $c$ is the number of classes.}
	\centering
	\begin{tabular}{c|rrrr}
		\hline 
		Dataset & $n$ & $n'$ & $m$& $c$\\
		\hline
		ORL  & 280  &120 & 1024 &40 \\
		Yale  & 100  & 65 &1024 &15  \\
		AR10p  & 90 & 40  & 2400&10  \\
		Lung  &  153 & 50 &3312 &5   \\
		TOX-171  & 100  & 71 &  5748 &4 \\
		Lymphoma  & 56   & 40 &4026&9   \\ 
		Fashion-MNIST  & 60000 & 10000 &784&10   \\
		CIFAR-10  & 60000 &10000  & 3072&10  \\
		Brain Tumor  & 2870 &394 &50176  &4   \\
		\hline
	\end{tabular}
	\label{tab:datasets}
	\vspace{-10pt}
\end{table}

\subsection{Multinomial logistic regression}

\begin{table*}[h!]
	\setlength{\abovecaptionskip}{-0.2cm} 
	\setlength{\belowcaptionskip}{-0.2cm}
	\caption{Numerical results of multinomial logistic regression for three face datasets.}
	\begin{center}
			\begin{tabular}{c|ccc|ccc|ccc}
				\hline
				&   \multicolumn{3}{c|}{ORL} & \multicolumn{3}{c|}{Yale} & \multicolumn{3}{c}{AR10P}\\ \hline
				Model & Training & Testing & $\lambda$ & Training & Testing & $\lambda$  & Training & Testing & $\lambda$ \\ \hline
				
				MLR & 95.71\% &90.83\%  & 0 & 90.00\% & 75.38\% &0 & 85.56\% & 90.00\% & 0\\ 
				MLR-$\ell_1$ & 96.07\% & 93.33\% & $10^{-4}$ & 89.00\% & 75.38\% & $10^{-6}$& 85.56\% &92.50\% & $10^{-3}$\\ 
				MLR-$\ell_2$ &96.07\%  & 93.33\% &$10^{-6}$  & 89.00\% & 75.38\% & $10^{-6}$& 85.56\% & 95.00\% & $10^{-4}$\\ 
				MLR-Tik & 96.42\% & 93.33\% & 1 & 90.00\% & 78.46\% & 0.1 &85.56\% & 97.50 \% & $10^{-2}$\\ 
				MLR-ours & 96.07\% & \textbf{95.00\%} & $10^{-4}$ & 92.00\% & \textbf{81.54\%}  &$10^{-5}$ &85.56\% & \textbf{100\%}  &$10^{-5}$ \\ 
				\hline
			\end{tabular}
	\end{center}
	\label{tab:face}
\end{table*}

\begin{table*}[h!]
	\setlength{\abovecaptionskip}{-0.2cm} 
	\setlength{\belowcaptionskip}{-0.4cm}
	\caption{Numerical results of multinomial logistic regression for three biological datasets.}
	\begin{center}
			\begin{tabular}{c|ccc|ccc|ccc}
				\hline
				&   \multicolumn{3}{c|}{Lung} & \multicolumn{3}{c|}{TOX-171} & \multicolumn{3}{c}{Lymphoma}\\ \hline
				Model & Training & Testing & $\lambda$ & Training & Testing & $\lambda$  & Training & Testing & $\lambda$ \\ \hline
				
				MLR & 95.43\% & 86.00\%  & 0 & 75.00\% & 57.75\% & 0 & 98.21\% & 85.00\% & 0\\ 
				MLR-$\ell_1$ & 93.46\% & 92.00\% & $10^{-2}$ & 66.00\% & 63.38\% &$10^{-2}$ & 98.21\% & 90.00\%& $10^{-2}$\\ 
				MLR-$\ell_2$ &94.12\%  & 94.00\% &$10^{-2}$  &   66.00\% &61.97\% &$10^{-4}$& 98.21\%  &87.50\% &$10^{-3}$\\ 
				MLR-Tik & 96.08\% & \textbf{96.00\%} & $10^{-1}$& 72.00\% & 67.61\% & 1 & 98.21\% & \textbf{95.00\%} &1 \\ 
				MLR-ours & 96.08\% & \textbf{96.00\%} & $10^{-2}$ & 71.00\% & \textbf{69.01\%} & $10^{-2}$ & 98.21\% &\textbf{95.00\%} &1 \\ 
				\hline
			\end{tabular}
	\end{center}
	\label{tab:biological}
\end{table*}

\begin{table*}[h!]
	\setlength{\abovecaptionskip}{-0.2cm} 
	\setlength{\belowcaptionskip}{-0.4cm}
	\caption{Comparisons of difference $\lambda$ between ours and Tikhonov regularization for MLR on Lymphoma dataset.}
	\begin{center}
		\begin{tabular}{c|ccccccc}
			\hline
			$\lambda$& $1$ &$10^{-1}$ &$10^{-2}$ &$10^{-3}$ & $10^{-4}$ & $10^{-5}$ & $10^{-6}$\\
			\hline
			MLR-Tik & \textbf{95.00\%} &82.50\% &67.50\% &42.50\% &42.50\% &45.00\% &45.00\%\\
			MLR-ours & \textbf{95.00\%} & 92.50\%& 87.50\% & 82.50\% &82.50\% &82.50\% &82.50\% \\
			\hline
		\end{tabular}
	\end{center}
	\label{tab:robust1}
	\vspace{-13pt}
\end{table*}


In this subsection, we compare the coupled tensor norm regularization model \eqref{MLR_R} with the $\ell^1$-norm \cite{schmidt2007fast}, $\ell^2$-norm \cite{ndiaye2015gap}, Tikhonov regularization \cite{bishop1995training} models. 
For all regularized models, we traverse   $\lambda$ from $\{10^{-6},10^{-5},10^{-4},10^{-3},\\10^{-2},10^{-1},1\}$ and report the results corresponding to $\lambda$ with the highest classification accuracy. 
The gradient or subgradient descent algorithm is adopted to solve the models. 
The corresponding stopping criteria is set as  
\begin{align*}
	\|W^{k+1}-W^k\|_{F}\le 10^{-4} \;\;\hbox{or}\;\; \|\nabla G(W^k)\|_{2}\le 10^{-4},
\end{align*}
and the maximum number of iterations is 2000. Moreover, we initialize $W^0=0$.

The    training accuracy, testing accuracy, and the choices of the optimal parameters 
on the face and biological datasets are elaborated in Tables \ref{tab:face} and   \ref{tab:biological}, respectively. 
For all six datasets, our regularization guarantees the highest testing accuracy  and lowest generalization error. 
We further compare the coupled tensor norm  and Tikhonov regularizations for all   $\lambda$    in  Table~\ref{tab:robust1}, which shows that our  regularization is more  robust.

\subsection{Deep neural networks}
We continue to compare the performance of DNN with the coupled norm regularization with the $\ell_1$ norm \cite{ma2019transformed} and Tikhonov regularization \cite{krogh1991simple}. 
The network structures we tested 
VGG-16 \cite{simonyan2014very}. 
Also, we verify the efficiency of our proposed method by setting the number of training samples from small to large. 

For all methods, the hyperparameters $\lambda$ and $\mu$ are optimized from 
$\{10^{-i},5\cdot 10^{-i}\}_{i=1}^6$
and we only report the best performance. The implementation details and the choices of hyperparameters are given in the appendix.
For Fashion-MNIST, we show the performance of different regularizers   with varying training sizes from 1000 to 60000. 
The detailed result is shown in Table \ref{tab:MNI_result1}. 

\begin{table}[!h]
	\setlength{\abovecaptionskip}{-0.2cm} 
	\setlength{\belowcaptionskip}{0.1cm}
	\caption{The testing accuracy of different regularizers for VGG-16 on Fashion-MNIST.}
	\begin{center}
		\begin{tabular}{c|cccc}
			\hline
			Model & \multicolumn{4}{c}{VGG-16 on Fashion-MNIST}             \\\hline
			\begin{tabular}[c]{@{}c@{}}Training \\ per class\end{tabular} & DNN      & DNN-$\ell_1$ & DNN-Tik & DNN-ours \\\hline
			100 & 80.95\% & 82.40\%  & 82.16\%   & \textbf{83.38\%}  \\
			400                                                           & 86.95\% & 87.78\%  & 87.13\%    & \textbf{88.15\%}  \\
			
			700                                                           & 88.60\% & 90.03\%    & 89.66\%    & \textbf{90.73\%} \\
			1000                                                           & 90.67\% & 90.85\%    & 90.76\%    & \textbf{91.28\%} \\
			3000                                                           & 92.13\% & 92.70\%    & 92.62\%    & \textbf{92.93\%} \\
			6000                                                           & 93.88\% & 94.29\%    & 94.30\%    & \textbf{94.73\%} \\\hline
		\end{tabular}
	\end{center}
	\label{tab:MNI_result1}
	\vspace{-22pt}
\end{table}

At last, we present the results for CIFAR-10 and  Brain Tumor  in Tables  \ref{tab:CIF_result} and \ref{tab:MRI_result}, respectively. 
The numerical experiments show that the generalization ability of our coupled norm regularizer is better than the baselines.

%

\begin{table}[h!]
	\setlength{\abovecaptionskip}{-0.2cm} 
	\setlength{\belowcaptionskip}{-0.1cm}
	\caption{The testing accuracy of different regularizers for VGG-16 on CIFAR-10.}
	\begin{center}
	\begin{tabular}{c|cccc}
		\hline
		Model                                                         & \multicolumn{4}{c}{VGG-16 on CIFAR-10}             \\\hline
		\begin{tabular}[c]{@{}c@{}}Training \\ per class\end{tabular} & DNN      & DNN-$\ell_1$ & DNN-Tik & DNN-ours \\\hline
		100                                                           & 48.57\% & 49.10\%  & 48.98\%   & \textbf{50.26\%}  \\
		400                                                           & 72.90\% & 73.29\%  & 73.32\%    & \textbf{74.15\%}  \\
		700                                                           & 78.97\% & 79.14\%    & 79.31\%    & \textbf{80.29\%} \\\hline
	\end{tabular}
    \end{center}
	\label{tab:CIF_result}
 \vspace{-20pt}
\end{table}

\begin{table}[h!]
	\setlength{\abovecaptionskip}{-0.2cm} 
	\setlength{\belowcaptionskip}{0cm}
	\caption{Numerical results of different regularizations for VGG-16 on the MRI dataset Brain Tumor.}
	\begin{center}
	\begin{tabular}{c|ccc}
		\hline
		& \multicolumn{3}{c}{VGG-16 on Brain Tumor}                                                            \\\hline
		Model                        & Training                    & Testing                     & $\lambda~(\mu)$                     \\\hline
		DNN                          & 99.50\%                     & 75.48\%                     & 0                                  \\
		DNN-$\ell_1$                 & 99.97\%                     & 76.40\%                     & $10^{-3}$                              \\
		DNN-Tik                      & 99.83\%                     & 76.67\%                     & $10^{-3}$                              \\
		DNN-ours     & 99.97\%  & \textbf{77.41\%} & $5\cdot 10^{-4}$ ($10^{-4}$)\\\hline
	\end{tabular}
    \end{center}
	\label{tab:MRI_result}
	\vspace{-20pt}
\end{table}

\section{Conclusions}
In this paper, we proposed a coupled tensor norm approach to obtain better
generalization for classification models. 
Theoretically, for MLR, we showed this regularization is convex, differentiable, and gradient Lipschitz continuous and proved the global convergence of the gradient descent method. For DNN, we showed the regularization is nonconvex and nonsmooth, and established the global convergence of the alternating minimization method. 
At last, we verified the efficiency of our  regularization compared with the $\ell_1$, $\ell_2$, and Tikhonov regularizations.

\section*{Acknowledgement}

This research is supported by the National Natural Science Foundation of China (NSFC) grants 12131004, 12126603, 
12126608, KZ37099001, and
KZ77010604.

\onecolumn
\section{Appendix}

Details of numerical implementation for DNN: Unless otherwise stated, all experiments use    SGD with momentum fixed at 0.9 and mini-batch size fixed as 128.  The networks are trained with a fixed learning rate $r_0=0.01$ on the first 50 epochs, and then $r_0/10$ for another 50 epochs. At step 1 of Algorithm \ref{alg:DNN}, $\theta$ is updated once every $M=2$ epochs of SGD. And at step 2, the step size is set to $1/k$. The stopping criterion is $\|grad_\xi\|_F < tol$, where $grad_\xi$ is the subgradient of \eqref{eq:NN_xi}. Further, we set $tol=10^{-2}$ and the maximum number of iterations as 50. 

\begin{table}[!ht]
	\caption{Hyperparameters of DNN for Fashion-MNIST dataset.}
	\begin{center}
		\begin{tabular}{c|cccc}
			\hline
			Model & \multicolumn{4}{c}{VGG-16}                       \\\hline
			\multirow{2}{*}{\begin{tabular}[c]{@{}c@{}}Training \\ per class\end{tabular}} 
			& DNN-$\ell_1$ & DNN-Tik    & \multicolumn{2}{c}{ DNN-ours} \\
			& $\lambda$ & $\lambda$ & $\lambda$ &       $\mu$  \\\hline
			100                                                                                & $10^{-5}$    & $10^{-5}$   & $5\cdot 10^{-4}$        & $5\cdot 10^{-3}$     \\
			400                                                                                    & $10^{-5}$    & $10^{-5}$    & $5\cdot 10^{-4}$         & $10^{-3}$     \\
			700                                                                                   & $10^{-5}$     & $10^{-5}$   & $5\cdot 10^{-4}$         & $5\cdot 10^{-4}$              \\
			1000                                                                                  & $10^{-4}$    & $10^{-5}$   & $5\cdot 10^{-4}$         & $5\cdot 10^{-4}$      \\
			3000                                                                                & $10^{-4}$    & $10^{-5}$    & $5\cdot 10^{-4}$         & $5\cdot 10^{-4}$    \\
			6000                                                                                 & $ 10^{-4}$     & $10^{-5}$   & $5\cdot 10^{-4}$         & $5\cdot 10^{-4}$      \\\hline        
		\end{tabular}
	\end{center}
	\label{tab:MNI_para2}
\end{table}

\begin{table}[!ht]
	\caption{Hyperparameters used in CIFAR-10 dataset.}
	\begin{center}
		\begin{tabular}{c|ccccc}
			\hline
			Model                                                                          & \multicolumn{5}{c}{VGG-16}                                     \\\hline
			\multirow{2}{*}{\begin{tabular}[c]{@{}c@{}}Training \\ per class\end{tabular}} & DNN        & DNN-$\ell_1$  & DNN-Tik  & \multicolumn{2}{c}{DNN-ours} \\
			& $\lambda$ & $\lambda$ & $\lambda$ & $\lambda$      & $\mu$     \\\hline
			100                                                                            & 0         & $10^{-4}$    & $10^{-5}$   & $5\cdot 10^{-5}$        & $5\cdot 10^{-2}$      \\
			400                                                                            & 0         & $10^{-4}$    & $10^{-3}$     & $5\cdot 10^{-4}$         & $5\cdot 10^{-2}$      \\
			700                                                                            & 0         & $10^{-3}$     & $10^{-5}$   & $5\cdot 10^{-4}$         & $5\cdot 10^{-2}$  \\\hline   
		\end{tabular}
	\end{center}
	\label{tab:CIF_para}
\end{table}

\end{document}